\documentclass[12pt]{article}
\usepackage[margin=1in]{geometry}
\DeclareMathAlphabet\mathbfcal{OMS}{cmsy}{b}{n}
\usepackage[utf8]{inputenc}
\usepackage[dvipsnames]{xcolor}
\usepackage{mathtools}
\usepackage{enumerate}
\usepackage{amssymb} 
\usepackage[utf8]{inputenc}
\usepackage{amsfonts}
\usepackage{amsthm}
\usepackage{amsmath}
\usepackage{lscape}
\usepackage{mathrsfs}
\usepackage{graphicx}
\usepackage{tikz-cd}
\usepackage{calc}
\usepackage{float}
\usepackage{stmaryrd}
\usepackage{multirow}
\usepackage{lipsum}
\usepackage{amssymb}
\usepackage{mathrsfs}
\usepackage{mathtools}
\usepackage{mathdots}
\usepackage{fancyhdr}
\usepackage{tikz}
\usepackage{faktor}
\usepackage{setspace}
\usetikzlibrary{decorations.markings}
\newtheorem{theorem}{Theorem}
\theoremstyle{plain}
\usepackage{sectsty}
\usepackage{kpfonts}
\newtheorem{corollary}{Corollary}
\newtheorem{definition}{Definition}
\newtheorem{example}{Example}

\newtheorem{remark}{Remark}

\newtheorem{question}{Question}

\newtheorem*{proposition*}{Proposition}

\newcommand{\GL}{{\mbox{GL}}}
\newcommand{\Hom}{{\mbox{Hom}}}

\newcommand{\vol}{{\mbox{vol}}}

\newtheorem*{observation*}{Observation}
\newtheorem*{theorem*}{Theorem}
\newtheorem*{claim*}{Claim}  
\usepackage{blindtext}
\usepackage{fancyhdr}
\title{Geometric structures as variational objects, II}
\author{Gabriella Clemente}
\date{}
\allowdisplaybreaks
\makeatletter
\renewcommand\tableofcontents{%
    \@starttoc{toc}%
}
\begin{document}
\maketitle

\begin{abstract}
This note is the sequel of ``Geometric structures as variational objects, I." It generalizes the main result and perspectives of that work to a class of geometric structures that includes integrable almost-complex structures.
\end{abstract}

This note generalizes the material of \cite{CSCPS}, and \cite{GSVO1}. The object of study here is a class of differential geometric structures on a real, smooth manifold that are inspired by integrable almost-complex structures. The intention is to provide a new technical context based on a calculus of variations for vector bundle-valued forms that could lead to a different way of thinking about and doing almost-complex geometry.

\section{Compound geometric structures}
Let $M$ be a compact smooth manifold with $\dim_{\mathbb{R}}(M)=n.$ For any vector bundle $V\to M,$ let $\Omega^{\bullet}(M, V)=\bigoplus_{k\geq 0} \Omega^k(M,V)$ be the space of $V$-valued differential forms on $M.$ For future reference, any $\rho \in \Omega^{\bullet}(M, V)$ can be expressed as a sum of its homogeneous parts, $\rho=\sum_{k\geq 0} \rho_k.$ The projection onto $k$-th degree forms will be denoted by $p_k$; i.e.\ \[p_k:\Omega^{\bullet} (M, V) \to \Omega^k (M, V), \quad p_k (\rho)=\rho_k.\] 

Let $E, E',$ and $E''$ be real vector bundles on $M.$ Take any pair of bilinear vector bundle homomorphisms \[E' \xleftarrow{\phi} E\oplus E \xrightarrow{\psi} E'',\] where $E\oplus E\to M$ is the Whitney sum of $E \to M$ with itself, and where by bilinearity it is meant that, in particular, $\phi(\zeta_1+\zeta_2,\eta)=\phi(\zeta_1,\eta)+\phi(\zeta_2,\eta),$ $\phi(\zeta,\eta_1+\eta_2)=\phi(\zeta,\eta_1)+\phi(\zeta,\eta_2),$ $c\phi(\zeta,\eta)=\phi(c\zeta,\eta)=\phi(\zeta,c\eta),$ and likewise for $\psi.$ Denote the image of any $(\zeta,\eta) \in E \oplus E$ under $\phi$ and $\psi$ by $\zeta \cdot_{\phi} \eta$ and $\zeta \cdot_{\psi} \eta,$ respectively. Then, $\phi$ and $\psi$ give rise to maps \[\Omega^{\bullet}(M, E') \xleftarrow{\wedge_{\phi}} \Omega^{\bullet}(M, E)\otimes \Omega^{\bullet}(M, E) \xrightarrow{\wedge_{\psi}} \Omega^{\bullet}(M,E')',\] defined, for any \[\alpha \in \Omega^k(M,E), \beta \in \Omega^l(M,E),\] as the forms \[\alpha \wedge_{\phi} \beta \in \Omega^{k+l}(M,E'), \mbox{ and }\alpha \wedge_{\psi} \beta \in \Omega^{k+l}(M,E''),\] where 
\[(\alpha \wedge_{\phi} \beta)(X_1,\dots,X_{k+l})=\frac{1}{k!l!}\sum_{\sigma \in S_{k+l}} sign(\sigma)\alpha(X_{\sigma(1)},\dots,X_{\sigma(k)}) \cdot_{\phi} \beta(X_{\sigma(k+1)},\dots,X_{\sigma(k+l)}),\] and similarly
\[(\alpha \wedge_{\psi} \beta)(X_1,\dots,X_{k+l})=\frac{1}{k!l!}\sum_{\sigma \in S_{k+l}} sign(\sigma)\alpha(X_{\sigma(1)},\dots,X_{\sigma(k)}) \cdot_{\psi} \beta(X_{\sigma(k+1)},\dots,X_{\sigma(k+l)}).\]

The bilinear nature of $\phi,$ and $\psi$ implies that, for example, if $\gamma_1, \gamma_2 \in \Omega^k(M,E),$ $\beta \in \Omega^l(M,E),$ and $t$ is a constant, then \[(\gamma_1+t\gamma_2) \wedge_{\phi} \beta=\gamma_1 \wedge_{\phi} \beta+t\gamma_2 \wedge_{\phi} \beta,\] and a similar distributive property holds using linearity in the second argument of $\phi.$ And, of course, analogous equalities hold for $\wedge_{\psi}$ as well.

Observe that any such bilinear $f:E\oplus E\to E$ can be used to turn the space $\Omega^{\bullet}(M,E)$ into a graded algebra, namely, $\big(\Omega^{\bullet}(M,E),\wedge_f \big).$

Now, suppose that $\Omega^{\bullet}(M,E)$ is acted on left by $\Omega^{\bullet}(M,E'),$ and on the right by $\Omega^{\bullet}(M,E'').$ The right/left action notation will be unambiguously $\wedge$; i.e.\ if $\rho \in \Omega^{\bullet}(M,E),$ $\alpha \in \Omega^{\bullet}(M,E'),$ and $\beta \in \Omega^{\bullet}(M,E''),$ then $\alpha \wedge \rho \wedge \beta$ should be interpreted as $\rho$ being acted on the left by $\alpha,$ and on the right by $\beta.$ 

Let $\nabla^E$ be any linear connection on $E.$ Recall the exterior covariant derivative $d^{\nabla^E}$ associated with $\nabla^E,$ which is is the degree $1$ operator on $E$-valued forms, acting on $k$-forms $\alpha \in \Omega^k(M,E)$ as 

\begin{equation*}
\begin{split}
(d^{\nabla^E} \alpha)(\zeta_0,\dots,\zeta_k)&=\sum_{i=0}^k (-1)^i {\nabla^{E}}_{\zeta_i} \alpha(\zeta_0,\dots,\widehat{\zeta_i},\dots,\zeta_k)+\\
&\sum_{0 \leq i <j \leq k} (-1)^{i+j} \alpha ([\zeta_i,\zeta_j],\dots,\widehat{\zeta_i},\dots,\widehat{\zeta_j},\dots,\zeta_k).
\end{split}
\end{equation*}  

\begin{definition}\label{D1}
A form $\gamma \in \Omega^k(M,E)$ is called a compound geometric structure of degree $k$ if belongs in the kernel of a differential operator $P^{\nabla^E}_{(a,\phi,\psi)}$ on $\Omega^{\bullet}(M,E)$ of degree $1$ on $\Omega^k(M,E),$ and of the type
\begin{equation*}
\begin{split}
P^{\nabla^E}_{(a,\phi,\psi)}(\rho)&:=a_1 (\rho \wedge_{\phi} \rho) \wedge d^{\nabla^E} \rho \wedge (\rho \wedge_{\psi} \rho)+a_2 (\rho \wedge_{\phi} \rho) \wedge d^{\nabla^E} \rho+\\
&a_3  d^{\nabla^E} \rho \wedge (\rho \wedge_{\psi} \rho)+a_4 d^{\nabla^E} \rho,
\end{split}
\end{equation*}
where $a=(a_1,a_2,a_3,a_4) \in \mathbb{R}^4.$
Moreover, if \[q_{a_1}(l)=deg\big((\rho_l \wedge_{\phi} \rho_l) \wedge d^{\nabla^E} \rho_l \wedge (\rho_l \wedge_{\psi} \rho_l) \big),\] \[q_{a_2}(l)=deg\big((\rho_l \wedge_{\phi} \rho_l) \wedge d^{\nabla^E} \rho_l\big),\] \[q_{a_3}(l)=deg\big(d^{\nabla^E} \rho_l \wedge (\rho_l \wedge_{\psi} \rho_l)\big),\] then $P^{\nabla^E}_{(a,\phi,\psi)}$ is required to be such that $q_{a_i}(l) \neq k,$ for all $i=1,2,3,$ for all $l.$
\end{definition}

\begin{remark}\label{req1}
Let $\gamma \in \Omega^k(M,E).$ The degree $1$ requirement means that $P^{\nabla^E}_{(a,\phi,\psi)}(\gamma) \in \Omega^{k+1}(M,E),$ and so $q_{a_1}(k)=q_{a_2}(k)=q_{a_3}(k)=k+1.$ Indeed, $\gamma$ is a $k$-compound geometric structure if $P^{\nabla^E}_{(a,\phi,\psi)}(\gamma)=0.$
\end{remark}
 
\begin{remark}\label{practice}
In practice, these structures tend to live in some specific subset $U \subset \Omega^k(M,E),$ hence are describable as $U \cap \ker{P^{\nabla^E}_{(a,\phi,\psi)}}.$
\end{remark}

Of course, these geometric structures generalize in a rather obvious way to $E$-valued $k$-forms $\gamma$ such that 
\begin{equation*}
\begin{split}
0&=\sum_{i\geq 0} \big[c_i(\gamma \wedge_{\phi_i} \gamma) \wedge d^{\nabla^E} \gamma \wedge (\rho \wedge_{\psi_i} \rho)+c'_i(\gamma \wedge_{\phi_i} \gamma) \wedge d^{\nabla^E} \gamma+\\
&c''_i d^{\nabla^E} \gamma \wedge (\rho \wedge_{\psi_i} \rho)+c'''_i d^{\nabla^E} \gamma \big],
\end{split}
\end{equation*}
where all but finitely many of the coefficients $c_i,c'_i,c''_i,c'''_i \in \mathbb{R}$ are zero, and where $(\phi_i)_{i \geq 0}$ and $(\psi_i)_{i \geq 0}$ are sequences of bilinear maps $\phi_i: E\oplus E \to E',$ respectively $\psi_i: E\oplus E \to E''.$ Moreover, one could vary the right/left-actions within each term. These generalizations though technically intriguing, do not lead to any sort of recognizable structure of differential geometry. This is why the present note concerns itself exclusively with geometric structures as given in Definition \ref{D1}.

Nevertheless, compound geometric structures have a simple generalization that is geometrically significant.

\begin{definition}\label{alph}
Assume that $1 \leq p \leq n-k-1,$ and let $\alpha \in \Omega^p (M)$ be a non-trivial, closed differential form. An $\alpha$-compound geometric structure of degree $k$ is any element in the kernel of a differential operator $P^{\nabla^E}_{\alpha, (a,\phi,\psi)}:=\alpha \wedge P^{\nabla^E}_{(a,\phi,\psi)}$ on $\Omega^{\bullet}(M,E),$ where $P^{\nabla^E}_{(a,\phi,\psi)}$ is of the kind specified in Definition \ref{D1}. It will moreover be assumed that for all $l,$ and all $1 \leq i \leq3,$ $p \neq k-q_{a_i}(l).$ 
\end{definition}

Clearly, since 
\begin{equation*}
\begin{split}
&P^{\nabla^E}_{\alpha, (a,\phi,\psi)}(\gamma)(X_1,\dots,X_{k+p})=(\alpha \wedge P^{\nabla^E}_{(a,\phi,\psi)}(\gamma))(X_1,\dots,X_{k+p})\\
&=\frac{1}{p!k!}\sum_{\sigma \in S_{k+p}} sign(\sigma) \alpha(X_{\sigma(1)},\dots,X_{\sigma(p)})P^{\nabla^E}_{(a,\phi,\psi)}(\gamma)(X_{\sigma(p+1)},\dots,X_{\sigma(k+p)})
\end{split}
\end{equation*}

\begin{remark}\label{obv}
Compound geometric structures are $\alpha$-compound for all $\alpha.$
\end{remark}

\begin{example}{(Complex structures)}\label{only}
Assume that $M$ is almost-complex. All of what comes next is rephrasing the algebraic machinery discussed first in \cite{ACOTI}. Let $\nabla$ be any torsion free connection on $T_M.$ In the above construction, take \[E=T_M, \quad E''=\bigwedge^{\bullet} {T_M}=\bigoplus_{k\geq 0}\bigwedge^k {T_M}.\] Let $\psi:T_M \oplus T_M \to \bigwedge^{\bullet} {T_M}$ be the polyvector wedge product map $(u,v) \mapsto \psi(u,v)=u \wedge v \in \bigwedge^2 {T_M}.$ This induces the product \[\wedge_{\psi}:\Omega^{\bullet}(M,T_M) \otimes \Omega^{\bullet}(M,T_M) \to \Omega^{\bullet}\big(M, \bigwedge^{\bullet} {T_M}\big)\] that for any $\alpha \in \Omega^k(M,T_M),$ $\beta \in \Omega^l(M,T_M)$ is defined as
\[(\alpha \wedge_{\psi} \beta)(X_1,\dots,X_{k+l})=\frac{1}{k! l!}\sum_{\sigma \in S_{k+l}} sign(\sigma) \alpha(X_{\sigma(1)},\dots, X_{\sigma(k)}) \wedge \beta(X_{\sigma(k+1)},\dots,X_{\sigma(k+l)}).\] Recall the right $\Omega^{\bullet}\big(M, \bigwedge^{\bullet} {T_M}\big)$-action on $\Omega^{\bullet}(M,T_M),$ which for any $\rho \in \Omega^s(M,T_M),$ and $\gamma \in \Omega^i \big(M, \bigwedge^j {T_M}\big)$ is given as
\begin{equation*}
\begin{split}
(\rho \wedge \gamma)(X_1,\dots,X_{s-j+i}):=\begin{cases}
\frac{1}{(s-j)!i!} \sum_{\sigma \in S_{s-j+i}} sign(\sigma)\rho(X_{\sigma(1)},\dots,X_{\sigma(s-j)},\\
\cdot,\dots,\cdot) \big(\gamma(X_{\sigma(s-j+1)},\dots,X_{\sigma(s-j+i)})\big) & \mbox{ if }s \geq j \\
0 & \mbox{ if }s<j,
\end{cases}
\end{split}
\end{equation*}
where $\rho(X_{\sigma(1)},\dots,X_{\sigma(s-j)},\cdot,\dots,\cdot)$ is being viewed as an element of $\Omega^{s-j} \big(M, \Hom_{\mathbb{R}} \big(\bigwedge^j {T_M}, T_M\big)\big),$ while $\gamma(X_{\sigma(s-j+1)},\dots,X_{\sigma(s-j+i)}) \in \bigwedge^j {T_M}.$ That is possbile because \[\Omega^s(M,T_M) \subset \Omega^{s-j} \big(M,\Hom_{\mathbb{R}} \big(\bigwedge^j {T_M},T_M\big)\big).\] 

For the coefficients in the structure defining PDE, take $a_1=a_2=0,$ $a_2=1,$ $a_4=-1.$ Then, an almost-complex structure $A \in \Omega^1(M,T_M)$ is integrable iff \[P^{\nabla}_{(a,\psi)}(A)=d^{\nabla} A \wedge (A \wedge_{\psi} A)-d^{\nabla} A=0\] (Lemma 1, \cite{ACOTI}). So, complex structures are examples of degree $1$ compound geometric structures. They may be collectively described as \[AC(M) \cap \ker{P^{\nabla}_{(a,\psi)}},\] where \[AC(M):=\{J \in \Omega^1(M,T_M) \mid J^2=-Id\}\] is the space of almost-complex structures on $M.$
\end{example}

\begin{example}{($\alpha$-integrable almost-complex structures)}\label{only1}
Let $\alpha \in \Omega^1(M)$ be non-trivial and $d$-closed. Building on the previous example, notice that an almost-complex structure $A$ is $\alpha$-integrable if $P^{\nabla}_{\alpha,(a,\psi)}(A)=0$ (cf.\ \cite{CSCPS} and Definition \ref{alph}). 
\end{example}

\section{Variational realizations}
As explained in \cite{GSVO1}, $\Omega^{\bullet}(M,E)$ carries an $L^2$-inner product. Let $h_E$ be a bundle metric on $E,$ and $g$ be a Riemannian metric on $M.$ Assume that $rk(E)=m,$ and let $(\epsilon_i)_{i=1}^m$ be a local frame for $E.$ Then, the bilinear map \[\wedge_E:\Omega^k(M,E) \otimes \Omega^l(M,E) \to \Omega^{k+l}(M),\] where if \[\alpha=\sum_{i=1}^m a_i \otimes \epsilon_i \in \Omega^k(M,E), \mbox{ and } \beta=\sum_{j=1}^m b_j \otimes \epsilon_j \in \Omega^l(M,E),\] \[\alpha \wedge_E \beta=\sum_{i,j=1}^m q_i \wedge b_j h_E(\epsilon_i,\epsilon_j),\] along with the pairing \[\langle \cdot,\cdot\rangle_{h_E}: \Omega^k(M,E) \otimes \Omega^k(M,E) \to \mathbb{R},\] where if \[\alpha'=\sum_{j=1}^m a'_j \otimes \epsilon_j, \mbox{ then } \langle\alpha, \alpha'\rangle_{h_E}=\sum_{i,j=1}^m \langle a_i, a'_j\rangle h_E(\epsilon_i,\epsilon_j),\] allow for an extension of the Hodge star operator to $\Omega^{\bullet}(M,E),$ which has the defining property that \[\star_{h_E}:  \Omega^k(M,E) \to \Omega^{n-k}(M,E), \quad \alpha \wedge_{h_E} \star_{h_E} \alpha'=\langle \alpha,\alpha'\rangle_{h_E} \vol_g.\] And then, through the pairing \[\langle\cdot,\cdot\rangle_k^{h_E}:\Omega^k(M,E) \otimes \Omega^k(M,E) \to \mathbb{R}, \quad \langle \alpha, \alpha' \rangle_k^{h_E}:=\int_M \alpha \wedge_{h_E} \star_{h_E} \alpha',\] one obtains the $L^2$-product \[\langle\langle \cdot, \cdot \rangle\rangle_{h_E}:\Omega^{\bullet}(M,E) \otimes \Omega^{\bullet}(M,E) \to \mathbb{R}, \quad \langle\langle A,B \rangle\rangle_{h_E}:=\sum_{k\geq 0} \langle A_k, B_k \rangle_k^{h_E},\] where $A=\sum_{k\geq 0} A_k,$ $B=\sum_{k\geq 0} B_k,$ and $A_k, B_k \in \Omega^k(M,E).$

Let $0 \leq m_1 < m_2 <\dots <m_s$ be any finite increasing sequence of non-negative integers. Given any such sequence, one can redefine the covariant exterior derivative w.r.t.\ $\nabla^E$ so that it vanishes in degrees $m_1,m_2,\dots,m_s,$ while remaining intact in all other degrees. Explicitly, if \[\delta_i^j=\begin{cases}
1 & \mbox{ if } i=j\\
0 & \mbox{ if } i\neq j
\end{cases}\] is the Kronecker delta function, and 
\begin{definition}\label{kronos}
If the restriction $d^{\nabla^E}\Big|_{\Omega^k (M, E)}$ is denoted by $d_k^{\nabla^E}:\Omega^k (M, E) \to \Omega^{k+1} (M, E),$ define an operator on $\Omega^{\bullet} (M, E)$ by \[d^{\nabla^E}[m_1,m_2,\dots,m_s]:=\sum_{i=0}^{n-1}(1-\sum_{j=1}^s\delta_{m_j-1}^i)d_i^{\nabla^E}\] so that 
\[d^{\nabla^E}[m_1,m_2,\dots,m_s]\Big|_{\Omega^i (M, E)}= \begin{cases} 
      d_i^{\nabla^E}, & i \neq m_1-1,m_2-1,\dots,m_s-1 \\
      0 , & i = m_1-1,m_2-1,\dots,m_s-1.
   \end{cases}
\]
\end{definition}
The original operator $d^{\nabla^E}$ has a formal adjoint $\delta^{\nabla^E}$ w.r.t.\ the above described $L^2$-inner product. Denoting $\delta^{\nabla^E}\Big|_{\Omega^k (M, E)}$ by $\delta_k^{\nabla^E}:\Omega^k (M, E) \to \Omega^{k-1} (M, E),$ the formal adjoint of $d^{\nabla^E}[m_1,m_2,\dots,m_s]$ satisfies
\[\delta^{\nabla^E}[m_1,m_2,\dots,m_s]\Big|_{\Omega^i (M, T_M)}= \begin{cases} 
      \delta_i^{\nabla^E}, & i \neq m_1,m_2,\dots,m_s\\
      0 , & i = m_1,m_2,\dots,m_s.
   \end{cases}
\]

For example, when $s=1,$ this coincides with the operator $d^{\nabla^E}[k]$ from \cite{GSVO1}. An instance of this operator with $s=2$ appears in \cite{CSCPS}

\begin{remark}\label{etde}
A differential operator of the kind $P^{\nabla^E}_{(a,\phi,\psi)}$ can be redefined with $d^{\nabla^E}[m_1,m_2,\dots,m_s]$:
\begin{equation*}
\begin{split}
P^{\nabla^E}_{(a,\phi,\psi)}[m_1,m_2,\dots,m_s](\rho)&:=a_1 (\rho \wedge_{\phi} \rho) \wedge d^{\nabla^E}[m_1,m_2,\dots,m_s] \rho \wedge (\rho \wedge_{\psi} \rho)+\\
&a_2 (\rho \wedge_{\phi} \rho) \wedge d^{\nabla^E}[m_1,m_2,\dots,m_s] \rho+\\
&a_3  d^{\nabla^E}[m_1,m_2,\dots,m_s] \rho \wedge (\rho \wedge_{\psi} \rho)+a_4 d^{\nabla^E}[m_1,m_2,\dots,m_s] \rho.
\end{split}
\end{equation*}
\end{remark}
Recall the definition of $k$\emph{-variational object} \cite{GSVO1}. This is the degree $k$ projection of any critical point of any functional with domain contained in some space of differential forms, taking values in a vector bundle.
\begin{theorem}\label{T1}
Compound $k$-geometric structures are $k$-variational objects. They can be realized via the functional \[\mathcal{P}^{\nabla^E}_{(a,\phi,\psi)}[k]:\Omega^{\bullet}(M,E)\to \mathbb{R}, \quad \mathcal{P}^{\nabla^E}_{(a,\phi,\psi)}[k](\gamma):=\langle\langle \mathbf{P}^{\nabla^E}_{(a,\phi,\psi)}[k](\gamma),\gamma\rangle\rangle_{h_E},\] where $\mathbf{P}^{\nabla^E}_{(a,\phi,\psi)}[k](\gamma):=\sum_{r \geq 0} P^{\nabla^E}_{(a,\phi,\psi)}[k](\gamma_r).$
\end{theorem}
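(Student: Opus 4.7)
The plan is to show that if $\gamma_k \in \Omega^k(M,E)$ satisfies $P^{\nabla^E}_{(a,\phi,\psi)}(\gamma_k) = 0$, then $\gamma_k$, regarded as an element of $\Omega^\bullet(M,E)$ with trivial components in every other degree, is a critical point of $\mathcal{P}^{\nabla^E}_{(a,\phi,\psi)}[k]$. Since its degree-$k$ projection is $\gamma_k$ itself, this exhibits every compound $k$-geometric structure as a $k$-variational object realized by this functional.

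First, I would record two decisive consequences of Definition \ref{kronos} at $s=1$, $m_1=k$: the operator $d^{\nabla^E}[k]$ agrees with $d^{\nabla^E}$ on $\Omega^k(M,E)$, but vanishes identically on $\Omega^{k-1}(M,E)$. Passing these through the definition of $P^{\nabla^E}_{(a,\phi,\psi)}[k]$ gives $P^{\nabla^E}_{(a,\phi,\psi)}[k](\gamma_k) = P^{\nabla^E}_{(a,\phi,\psi)}(\gamma_k) \in \Omega^{k+1}(M,E)$ by Remark \ref{req1}, together with $P^{\nabla^E}_{(a,\phi,\psi)}[k](\eta_{k-1}) = 0$ for every $\eta_{k-1} \in \Omega^{k-1}(M,E)$, because each of the four summands defining $P^{\nabla^E}_{(a,\phi,\psi)}[k]$ carries a factor of $d^{\nabla^E}[k]$ applied to the degree-$(k-1)$ input.

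Next I would compute $\frac{d}{dt}\big|_{t=0}\mathcal{P}^{\nabla^E}_{(a,\phi,\psi)}[k](\gamma_k + t\eta)$ for a generic $\eta = \sum_{r\geq 0}\eta_r$. By componentwise action of $\mathbf{P}^{\nabla^E}_{(a,\phi,\psi)}[k]$ and the previous step, $\mathbf{P}^{\nabla^E}_{(a,\phi,\psi)}[k](\gamma_k + t\eta) = P^{\nabla^E}_{(a,\phi,\psi)}(\gamma_k + t\eta_k) + \sum_{r\notin\{k,k-1\}} P^{\nabla^E}_{(a,\phi,\psi)}(t\eta_r)$. The $a_1$, $a_2$, $a_3$ summands of $P^{\nabla^E}_{(a,\phi,\psi)}(t\eta_r)$ are of order $t^5$, $t^3$, $t^3$ respectively in the variation, so only the $a_4$ piece $a_4 t\,d^{\nabla^E}\eta_r$ contributes at first order in $t$. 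Expanding the bilinear pairing against $\gamma_k + t\eta$, the first variation splits into three families of $L^2$-pairings: $\langle\langle D P^{\nabla^E}_{(a,\phi,\psi)}\big|_{\gamma_k}(\eta_k), \gamma_k\rangle\rangle_{h_E}$, $\sum_{r\notin\{k,k-1\}} a_4\langle\langle d^{\nabla^E}\eta_r, \gamma_k\rangle\rangle_{h_E}$, and $\langle\langle P^{\nabla^E}_{(a,\phi,\psi)}(\gamma_k),\eta\rangle\rangle_{h_E}$.

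Degree matching in the graded $L^2$-product then kills every family except one: $D P^{\nabla^E}_{(a,\phi,\psi)}\big|_{\gamma_k}(\eta_k) \in \Omega^{k+1}(M,E)$ cannot pair with $\gamma_k \in \Omega^k(M,E)$; $d^{\nabla^E}\eta_r \in \Omega^{r+1}(M,E)$ matches $\gamma_k$ only when $r = k-1$, an index excluded from the sum; and $P^{\nabla^E}_{(a,\phi,\psi)}(\gamma_k) \in \Omega^{k+1}(M,E)$ matches $\eta_r$ only at $r = k+1$. Thus the Euler--Lagrange condition reduces to $\langle P^{\nabla^E}_{(a,\phi,\psi)}(\gamma_k),\eta_{k+1}\rangle_{k+1}^{h_E} = 0$ for every $\eta_{k+1}$, which by non-degeneracy of the pairing is exactly $P^{\nabla^E}_{(a,\phi,\psi)}(\gamma_k) = 0$, the hypothesis. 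The step that needs the most care is this degree bookkeeping, in particular verifying that the cubic and quintic pieces of $P^{\nabla^E}_{(a,\phi,\psi)}(t\eta_r)$ contribute nothing at order $t$ and that the linearization at $\gamma_k$ produces only terms in degree $k+1$, so that the surviving first-variation contributions really are just those enumerated above.
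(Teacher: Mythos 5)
Your argument is correct, but it takes a genuinely different route from the paper's. The paper computes the first variation at an arbitrary $\gamma\in\Omega^{\bullet}(M,E)$, introduces the seven linear maps $L_{1,\gamma_r},L'_{1,\gamma_r},\dots,L'_{3,\gamma_r}$ together with their formal adjoints, reads off the Euler--Lagrange system degree by degree, and then carves out an intermediary sub-domain $\widetilde{\Omega}^k(M,E)$ (imposing $\gamma_{k+1}=0$, $\gamma_{q_{a_i}(k+1)}=0$, $\delta^{\nabla^E}\gamma_{k+2}=0$) on which that system collapses to the structural equation $P^{\nabla^E}_{(a,\phi,\psi)}[k](\gamma_k)=0$ sitting in degree $k+1$, while leaving $\gamma_k$ unconstrained in degree $k$. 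You instead evaluate the first variation only at the zero-extension of $\gamma_k$, where the $a_1,a_2,a_3$ contributions from the off-degree components are of order $t^3$ or $t^5$, and the remaining terms die by degree matching in the graded $L^2$-product together with $d^{\nabla^E}[k]\big|_{\Omega^{k-1}(M,E)}=0$, leaving exactly $\langle P^{\nabla^E}_{(a,\phi,\psi)}(\gamma_k),\eta_{k+1}\rangle^{h_E}_{k+1}$. This is shorter, avoids the adjoint operators and the sub-domain entirely, shows criticality on all of $\Omega^{\bullet}(M,E)$ with no restriction, and also yields the converse on the slice of degree-$k$-concentrated forms; that suffices for the theorem as literally stated, since a $k$-variational object need only be the degree-$k$ projection of \emph{some} critical point, and your construction is manifestly compatible with further restricting $\gamma_k$ to a subset $U$ as in Remark \ref{practice} (the $\eta_k$-directions contribute nothing anyway). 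What you give up is the paper's stronger set-theoretic claim that $p_k(\mathcal{S}\cap\widetilde{U})$ coincides exactly with $U\cap\ker P^{\nabla^E}_{(a,\phi,\psi)}$ --- your argument does not exclude critical points with nonvanishing components in other degrees whose degree-$k$ projection violates the structural equation --- as well as the explicit adjoint and Euler--Lagrange formulas that the paper reuses in Corollary \ref{fgfof} for the gradient flow. If you want the exact-coincidence version of the realization, you would still need something like the paper's intermediary sub-domain.
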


\begin{proof}
Suppose that the geometric structures that one is seeking to variationally realize are given as $U \cap \ker{P^{\nabla^E}_{(a,\phi,\psi)}}.$ The goal is to show functional $\mathcal{P}^{\nabla^E}_{(a,\phi,\psi)}[k]$ restricted to a sub-domain that results from modifying $\gamma \in \Omega^{\bullet}(M,E)$ in degrees other thank $k$ has the key feature that its set of critical points, when projected onto $\Omega^k(M,E),$ matches precisely with $U \cap \ker{P^{\nabla^E}_{(a,\phi,\psi)}}.$ 

The 1st variation of the functional is
\begin{equation*}
\begin{split}
\frac{d}{dt} \Big|_{t=0} \mathcal{P}^{\nabla^E}_{(a,\phi,\psi)}[k](\gamma+t\beta)&=\big\langle\big\langle \sum_{r \geq 0} P^{\nabla^E}_{(a,\phi,\psi)}[k](\gamma_r+t\beta_r), \gamma_r+t\beta_r \big\rangle\big\rangle_{h_E}\\
&=\big\langle\big\langle\sum_{r\geq 0} \big[a_1 \big(\beta_r \wedge_{\phi} \gamma_r+\gamma_r \wedge_{\phi} \beta_r\big) \wedge d^{\nabla^E}[k] \gamma_r \wedge (\gamma_r \wedge_{\psi} \gamma_r)+\\
&a_1 (\gamma_r \wedge_{\phi} \gamma_r) \wedge d^{\nabla^E}[k] \beta_r \wedge (\gamma_r \wedge_{\psi} \gamma_r)+\\
&a_1 (\gamma_r \wedge_{\phi} \gamma_r) d^{\nabla^E}[k] \gamma_r \wedge (\beta_r \wedge_{\psi} \gamma_r+\gamma_r \wedge_{\psi} \beta_r)+\\
&a_2 (\beta_r \wedge_{\phi} \gamma_r+\gamma_r \wedge_{\phi} \beta_r) \wedge d^{\nabla^E}[k] \gamma_r+a_2 (\gamma_r \wedge_{\phi} \gamma_r) \wedge d^{\nabla^E}[k] \beta_r+\\
&a_3 d^{\nabla^E}[k] \beta_r \wedge (\gamma_r \wedge_{\psi} \gamma_r) + a_3 d^{\nabla^E}[k] \gamma_r \wedge (\beta_r \wedge_{\psi} \gamma_r+\gamma_r \wedge_{\psi} \beta_r)+\\
&a_4 d^{\nabla^E}[k] \beta_r\big], \gamma \big\rangle\big\rangle_{h_E}+\big\langle\big\langle \beta, \mathbf{P}^{\nabla^E}_{(a,\phi,\psi)}[k](\gamma)\big\rangle\big\rangle_{h_E}.
\end{split}
\end{equation*}
Consider the linear maps
\[L_{1,\gamma_r}:\Omega^r(M,E) \to \Omega^{q_{a_1}(r)}(M,E),\]
\[L'_{1,\gamma_r}:\Omega^{r+1}(M,E) \to \Omega^{q_{a_1}(r)}(M,E),\]
\[L''_{1,\gamma_r}:\Omega^r(M,E) \to \Omega^{q_{a_1}(r)}(M,E),\]
\[L_{2,\gamma_r}:\Omega^r(M,E) \to \Omega^{q_{a_2}(r)}(M,E),\]
\[L'_{2,\gamma_r}:\Omega^{r+1}(M,E) \to \Omega^{q_{a_2}(r)}(M,E),\]
\[L_{3,\gamma_r}:\Omega^{r+1}(M,E) \to \Omega^{q_{a_3}(r)}(M,E),\] and
\[L'_{3,\gamma_r}:\Omega^r(M,E) \to \Omega^{q_{a_3}(r)}(M,E)\] with definitions
\[L_{1,\gamma_r}(b)=a_1 \big(b \wedge_{\phi} \gamma_r+\gamma_r \wedge_{\phi} b\big) \wedge d^{\nabla^E}[k] \gamma_r \wedge (\gamma_r \wedge_{\psi} \gamma_r),\]
\[L'_{1,\gamma_r}(B)=a_1 (\gamma_r \wedge_{\phi} \gamma_r) \wedge B \wedge (\gamma_r \wedge_{\psi} \gamma_r),\]
\[L''_{1,\gamma_r}(b)=a_1 (\gamma_r \wedge_{\phi} \gamma_r) d^{\nabla^E}[k] \gamma_r \wedge (b \wedge_{\psi} \gamma_r+\gamma_r \wedge_{\psi} b),\]
\[L_{2,\gamma_r}(b)=a_2 (b \wedge_{\phi} \gamma_r+\gamma_r \wedge_{\phi}b) \wedge d^{\nabla^E}[k] \gamma_r,\]
\[L'_{2,\gamma_r}(B)=a_2 (\gamma_r \wedge_{\phi} \gamma_r) \wedge B,\]
\[L_{3,\gamma_r}(B)=a_3 B \wedge (\gamma_r \wedge_{\psi} \gamma_r),\] and
\[L'_{3,\gamma_r}(b)=a_3 d^{\nabla^E}[k] \gamma_r \wedge (b \wedge_{\psi} \gamma_r+\gamma_r \wedge_{\psi} b).\] It follows that
\begin{equation*}
\begin{split}
\frac{d}{dt} \Big|_{t=0} \mathcal{P}^{\nabla^E}_{(a,\phi,\psi)}[k](\gamma+t\beta)&=\sum_{r\geq 0} \big[\big\langle L_{1,\gamma_r}(\beta_r)+L'_{1,\gamma_r}(d^{\nabla^E}[k] \beta_r)+L''_{1,\gamma_r}(\beta_r),\gamma_{q_{a_1}(r)}\big\rangle^{h_E}_{q_{a_1}(r)}+\\&\big\langle L_{2,\gamma_r} (\beta_r)+L'_{2,\gamma_r}(d^{\nabla^E}[k] \beta_r),\gamma_{q_{a_2}(r)}\big\rangle^{h_E}_{q_{a_2}(r)}+\\
&\big\langle L_{3,\gamma_r}(d^{\nabla^E}[k] \beta_r)+L'_{3,\gamma_r}(\beta_r),\gamma_{q_{a_3}(r)}\big\rangle^{h_E}_{q_{a_3}(r)}\\
&\big\langle a_4 d^{\nabla^E}[k] \beta_r, \gamma_{r+1}\big\rangle^{h_E}_{r+1}+\big\langle\big\langle \beta, \mathbf{P}^{\nabla^E}_{(a,\phi,\psi)}[k](\gamma)\big\rangle\big\rangle_{h_E}.
\end{split}
\end{equation*}
Let \[(L_{1,\gamma_r})^*:\Omega^{q_{a_1}(r)}(M,E) \to \Omega^r(M,E)\] be the adjoint of $L_{1,\gamma_r},$ and designate the linear adjoints of all remaining maps with $^*$ as well. Then, 
\begin{equation*}
\begin{split}
\frac{d}{dt} \Big|_{t=0} \mathcal{P}^{\nabla^E}_{(a,\phi,\psi)}[k](\gamma+t\beta)&=\sum_{r\geq 0} \big[\big\langle \beta_r, (L_{1,\gamma_r})^*(\gamma_{q_{a_1}(r)})+\delta^{\nabla^E}[k]\big((L'_{1,\gamma_r})^*(\gamma_{q_{a_1}(r)})\big)+(L''_{1,\gamma_r})^*(\gamma_{q_{a_1}(r)})+\\
&(L_{2,\gamma_r})^*(\gamma_{q_{a_2}(r)})+\delta^{\nabla^E}[k]\big((L'_{2,\gamma_r})^*(\gamma_{q_{a_2}(r)})\big)+\delta^{\nabla^E}[k]\big((L_{3,\gamma_r})^*(\gamma_{q_{a_3}(r)})\big)+\\
&(L'_{3,\gamma_r})^*(\gamma_{q_{a_3}(r)})+a_4\delta^{\nabla^E}[k] \gamma_{r+1}\big\rangle^{h_E}_r\big]+\big\langle\big\langle \beta, \mathbf{P}^{\nabla^E}_{(a,\phi,\psi)}[k](\gamma)\big\rangle\big\rangle_{h_E}\\
&=\big\langle\big\langle \beta, \sum_{r\geq 0} \big[(L_{1,\gamma_r})^*(\gamma_{q_{a_1}(r)})+\delta^{\nabla^E}[k]\big((L'_{1,\gamma_r})^*(\gamma_{q_{a_1}(r)})\big)+(L''_{1,\gamma_r})^*(\gamma_{q_{a_1}(r)})+\\
&(L_{2,\gamma_r})^*(\gamma_{q_{a_2}(r)})+\delta^{\nabla^E}[k]\big((L'_{2,\gamma_r})^*(\gamma_{q_{a_2}(r)})\big)+\delta^{\nabla^E}[k]\big((L_{3,\gamma_r})^*(\gamma_{q_{a_3}(r)})\big)+\\
&(L'_{3,\gamma_r})^*(\gamma_{q_{a_3}(r)})+a_4\delta^{\nabla^E}[k] \gamma_{r+1}\big]+\mathbf{P}^{\nabla^E}_{(a,\phi,\psi)}[k](\gamma)\big\rangle\big\rangle_{h_E}.
\end{split}
\end{equation*}

Now, simply read off the parts of the Euler-Lagrange (EL) equation that involve $\gamma_k$ (excluding the structural equation). The first line in the above calculation says that (almost) all of those parts occur in the homogeneous degree $k$ stage of the EL equation:  
\begin{equation*}
\begin{split}
&(L_{1,\gamma_k})^*(\gamma_{q_{a_1}(k)})+\delta^{\nabla^E}[k]\big((L'_{1,\gamma_k})^*(\gamma_{q_{a_1}(k)})\big)+(L''_{1,\gamma_k})^*(\gamma_{q_{a_1}(k)})+(L_{2,\gamma_k})^*(\gamma_{q_{a_2}(k)})+\\
&\delta^{\nabla^E}[k]\big((L'_{2,\gamma_k})^*(\gamma_{q_{a_2}(k)})\big)+\delta^{\nabla^E}[k]\big((L_{3,\gamma_k})^*(\gamma_{q_{a_3}(k)})\big)+(L'_{3,\gamma_k})^*(\gamma_{q_{a_3}(k)})+\\
&a_4\delta^{\nabla^E}[k] \gamma_{k+1}+a_4d^{\nabla^E}[k] \gamma_{k-1}=0.
\end{split}
\end{equation*}
Since $q_{a_i}(k)=k+1,$ for all $1 \leq i \leq 3,$ it is possible to take $\gamma_{k+1}=0.$ 

There is one more place where $\gamma_k$ appears: in $a_4\delta^{\nabla^E}[k] \gamma_{k},$ which is part of the degree $k-1$ part of the EL equation. Had one chosen to work with the genuine covariant exterior derivative, this would have been a worrisome term. However, since the functional has been defined with $d^{\nabla^E}[k],$ $\delta^{\nabla^E}[k] \gamma_{k}=0,$ in this way undoing the problem without touching $\gamma_k.$ What this procedure does is free $\gamma_k$ of unnecessary constraints.

And now look at the degree $k+1$ part of the EL equation, which is where the structural equation appears,
\begin{equation*}
\begin{split}
&P^{\nabla^E}_{(a,\phi,\psi)}[k](\gamma_k)+(L_{1,\gamma_{k+1}})^*(\gamma_{q_{a_1}(k+1)})+\delta^{\nabla^E}[k]\big((L'_{1,\gamma_{k+1}})^*(\gamma_{q_{a_1}(k+1)})\big)+(L''_{1,\gamma_{k+1}})^*(\gamma_{q_{a_1}(k+1)})+\\&(L_{2,\gamma_{k+1}})^*(\gamma_{q_{a_2}(k+1)})+\delta^{\nabla^E}[k]\big((L'_{2,\gamma_{k+1}k})^*(\gamma_{q_{a_2}(k+1)})\big)+\delta^{\nabla^E}[k]\big((L_{3,\gamma_{k+1}})^*(\gamma_{q_{a_3}(k+1)})\big)+\\&(L'_{3,\gamma_{k+1}})^*(\gamma_{q_{a_3}(k+1)})+a_4\delta^{\nabla^E}[k] \gamma_{k+2}=0.
\end{split}
\end{equation*}

This suggests taking $\gamma_{q_{a_i}(k+1)}=0,$ for all $1 \leq i \leq 3,$ and keep in mind that $q_{a_i}(k+1)\neq k,$ and also $\delta^{\nabla^E} \gamma_{k+2}=0.$ 

Let $\mathcal{S}$ be the critical point set of the functional. Then, if \[\widetilde{\Omega}^k(M,E):=\{\gamma \in \Omega^{\bullet}(M,E) \mid \gamma_{k+1}=0, \gamma_{q_{a_1}(k+1)}=0, \gamma_{q_{a_2}(k+1)}=0, \gamma_{q_{a_3}(k+1)}=0, \delta^{\nabla^E} \gamma_{k+2}=0\},\] the critical point set of the restriction of $ \mathcal{P}^{\nabla^E}_{(a,\phi,\psi)}[k]$ to \[\widetilde{U}:=\{\gamma \in \widetilde{\Omega}^k(M,E) \mid \gamma_k \in U\}\] equals $\mathcal{S} \cap \widetilde{U},$ and set theoretically, \[p_k(\mathcal{S} \cap \widetilde{U})\] coincides with the compound geometric structures of interest. 
\end{proof}

A set such as $\widetilde{\Omega}^k(M,E)$ will be referred to as an \emph{intermediary sub-domain}.

\begin{theorem}\label{T2}
Compound $k$-$\alpha$-geometric structures are $k$-variational objects. They can be realized via the functional \[\mathcal{P}^{\nabla^E}_{\alpha,(a,\phi,\psi)}[k]:\Omega^{\bullet}(M,E)\to \mathbb{R}, \quad \mathcal{P}^{\nabla^E}_{\alpha,(a,\phi,\psi)}[k](\gamma):=\langle\langle \mathbf{P}^{\nabla^E}_{\alpha, (a,\phi,\psi)}[k](\gamma),\gamma\rangle\rangle_{h_E},\] where $\mathbf{P}^{\nabla^E}_{\alpha, (a,\phi,\psi)}[k](\gamma):=\sum_{r \geq 0} P^{\nabla^E}_{\alpha, (a,\phi,\psi)}[k](\gamma_r).$
\end{theorem}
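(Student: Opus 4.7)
The plan is to proceed in direct analogy with the proof of Theorem \ref{T1}, treating the fixed closed form $\alpha$ as a multiplicative prefactor that is absorbed into each of the auxiliary linear maps used there. Writing out the first variation
\[\frac{d}{dt}\Big|_{t=0}\mathcal{P}^{\nabla^E}_{\alpha,(a,\phi,\psi)}[k](\gamma+t\beta)\]
produces terms of the same combinatorial shape as in Theorem \ref{T1}, each now prefixed by $\alpha\wedge$; since $\alpha$ does not vary with $t$, no $d\alpha$-type term ever enters. I would accordingly introduce seven families of linear maps $L^\alpha_{i,\gamma_r}$, $(L^\alpha_{i,\gamma_r})'$, $(L^\alpha_{i,\gamma_r})''$ (following the notation of Theorem \ref{T1}) whose codomains are shifted by $p:=\deg\alpha$; for instance,
\[(L^\alpha_{1,\gamma_r})':\Omega^{r+1}(M,E)\to\Omega^{p+q_{a_1}(r)}(M,E),\quad B\mapsto a_1\,\alpha\wedge(\gamma_r\wedge_{\phi}\gamma_r)\wedge B\wedge(\gamma_r\wedge_{\psi}\gamma_r),\]
with the remaining six defined analogously.

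After pairing the variation with $\gamma$ via $\langle\langle\cdot,\cdot\rangle\rangle_{h_E}$ and moving each occurrence of $d^{\nabla^E}[k]$ onto the corresponding adjoint $\delta^{\nabla^E}[k]$, the Euler--Lagrange equation organizes itself by total form degree, with the structural equation $P^{\nabla^E}_{\alpha,(a,\phi,\psi)}[k](\gamma_k)=0$ appearing in the degree $k+p+1$ stratum. The other adjoint contributions in that stratum are to be killed by imposing vanishing of the appropriate homogeneous components of $\gamma$. The key input enabling this is the hypothesis $p\neq k-q_{a_i}(l)$ built into Definition \ref{alph}, which plays exactly the role that $q_{a_i}(l)\neq k$ did in Theorem \ref{T1}: it guarantees that no auxiliary term of degree $p+q_{a_i}(l)$ ever coincides with degree $k$, so the constraints one imposes to isolate the structural equation do not accidentally constrain $\gamma_k$ itself. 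Concretely, the intermediary sub-domain takes the shape
\[\widetilde{\Omega}^k_\alpha(M,E):=\{\gamma\in\Omega^\bullet(M,E)\mid \gamma_{k+p+1}=0,\ \gamma_{p+q_{a_i}(k+1)}=0\ \text{for}\ i=1,2,3,\ \delta^{\nabla^E}\gamma_{k+p+2}=0\},\]
and one sets $\widetilde{U}_\alpha:=\{\gamma\in\widetilde{\Omega}^k_\alpha(M,E)\mid \gamma_k\in U\}$. The nuisance term $a_4\delta^{\nabla^E}[k]\gamma_k$ in the degree $k-1$ stratum vanishes automatically thanks to the $[k]$-modification of the covariant exterior derivative, exactly as in Theorem \ref{T1}.

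I expect the main obstacle to be pure bookkeeping: tracking the $p$-shift through the seven families of auxiliary maps and verifying stratum by stratum that the inequality of Definition \ref{alph} truly decouples the structural equation from every adjacent degree. Once this bookkeeping is discharged, the conclusion follows exactly as in the previous theorem: the projection $p_k$ of the critical set of $\mathcal{P}^{\nabla^E}_{\alpha,(a,\phi,\psi)}[k]\big|_{\widetilde{U}_\alpha}$ coincides set-theoretically with $U\cap\ker P^{\nabla^E}_{\alpha,(a,\phi,\psi)}$, as desired.
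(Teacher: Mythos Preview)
Your approach is exactly the paper's: absorb $\alpha$ into the auxiliary linear maps, compute the first variation, take adjoints, and isolate the structural equation $P^{\nabla^E}_{\alpha,(a,\phi,\psi)}[k](\gamma_k)=0$ in its proper degree by zeroing out the offending homogeneous components. The one ingredient you do not name explicitly is the map $N^{\alpha}_r(x)=a_4\,\alpha\wedge x$, which the paper introduces because the $a_4$-term is no longer a bare $d^{\nabla^E}[k]$ but $\alpha\wedge d^{\nabla^E}[k]$; its adjoint $(N^{\alpha}_r)^*$ is what carries the extra $p$-shift through the last term.

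That said, your explicit bookkeeping is off by $p$ in several places, and in a way that matters. The adjoint terms sitting in the degree $k+p+1$ stratum are indexed by $r=k+p+1$, so they involve $\gamma_{p+q_{a_i}(k+p+1)}$ and, via $(N^{\alpha}_{k+p+1})^*$, the form $\delta^{\nabla^E}\gamma_{2p+k+2}$; your sub-domain instead imposes $\gamma_{p+q_{a_i}(k+1)}=0$ and $\delta^{\nabla^E}\gamma_{k+p+2}=0$, which live in the wrong strata and would not clear the structural equation. Likewise, the nuisance term $(N^{\alpha}_{k-p-1})^*\bigl(\delta^{\nabla^E}[k]\gamma_k\bigr)$ lands in degree $k-p-1$, not $k-1$. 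The correct intermediary sub-domain is
\[
\widetilde{\Omega}^k(M,E)=\bigl\{\gamma\;\bigm|\;\gamma_{k+p+1}=0,\ \gamma_{p+q_{a_i}(k+p+1)}=0\ (i=1,2,3),\ \delta^{\nabla^E}\gamma_{2p+k+2}=0\bigr\}.
\]
You anticipated that the $p$-shift bookkeeping would be the main hazard, and it is; once these indices are corrected your argument goes through verbatim.
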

\begin{proof}
Again, it will be assumed that the structures are given as $U \cap \ker{P^{\nabla^E}_{\alpha, (a,\phi,\psi)}},$ for some subset $U \subset \Omega^k(M, E),$ and also that $p$ is even, without any loss. Building on the proof of Theorem \ref{T1}, consider the maps \[L_{\alpha, 1,\gamma_r}:=\alpha \wedge L_{1,\gamma_r}, L'_{\alpha, 1,\gamma_r}:=\alpha \wedge L'_{1,\gamma_r}, L'_{\alpha, 1,\gamma_r}:=\alpha \wedge L'_{1,\gamma_r},\]
\[L_{\alpha, 2,\gamma_r}:=\alpha \wedge L_{2,\gamma_r}, L'_{\alpha, 2,\gamma_r}:=\alpha \wedge L'_{2,\gamma_r},\] and 
\[L_{\alpha, 3,\gamma_r}:=\alpha \wedge L_{3,\gamma_r}, L'_{\alpha, 3,\gamma_r}:=\alpha \wedge L'_{3,\gamma_r},\] and \[N^{\alpha}_r:\Omega^r (M, E) \to \Omega^{r+p} (M, E), \quad N^{\alpha}_r(x)=a_4\alpha \wedge x.\] Let their linear adjoints be designated with the symbol $^*$; e.g.\ $(L_{\alpha, 1,\gamma_r})^*:\Omega^{q_{a_1}(r)+p}(M,E)\to\Omega^r(M,E).$

The first variation is
\begin{equation*}
\begin{split}
\frac{d}{dt} \Big|_{t=0} \mathcal{P}^{\nabla^E}_{\alpha,(a,\phi,\psi)}[k](\gamma+t\beta)&=\big\langle\big\langle \beta, \sum_{r\geq 0} \big[(L_{\alpha, 1,\gamma_r})^*(\gamma_{q_{a_1}(r)+p})+\\
&\delta^{\nabla^E}[k]\big((L'_{\alpha, 1,\gamma_r})^*(\gamma_{q_{a_1}(r)+p})\big)+(L''_{\alpha, 1,\gamma_r})^*(\gamma_{q_{a_1}(r)+p})+\\
&(L_{\alpha, 2,\gamma_r})^*(\gamma_{q_{a_2}(r)+p})+\delta^{\nabla^E}[k]\big((L'_{\alpha, 2,\gamma_r})^*(\gamma_{q_{a_2}(r)+p})\big)+\\
&\delta^{\nabla^E}[k]\big((L_{\alpha, 3,\gamma_r})^*(\gamma_{q_{a_3}(r)+p})\big)+(L'_{\alpha, 3,\gamma_r})^*(\gamma_{q_{a_3}(r)+p})+\\
&(N^{\alpha}_r)^*(\delta^{\nabla^E}[k]\gamma_{p+r+1})\big]+\mathbf{P}^{\nabla^E}_{\alpha, (a,\phi,\psi)}(\gamma)\big\rangle\big\rangle_{h_E} 
\end{split}
\end{equation*}

The degree $k$ part of the EL equation is 
\begin{equation*}
\begin{split}
&(L_{\alpha, 1,\gamma_k})^*(\gamma_{k+1+p})+\delta^{\nabla^E}[k]\big((L'_{\alpha, 1,\gamma_k})^*(\gamma_{k+1+p})\big)+(L''_{\alpha, 1,\gamma_k})^*(\gamma_{k+1+p})+(L_{\alpha, 2,\gamma_k})^*(\gamma_{k+1+p})+\\
&\delta^{\nabla^E}[k]\big((L'_{\alpha, 2,\gamma_k})^*(\gamma_{k+1+p})\big)+\delta^{\nabla^E}[k]\big((L_{\alpha, 3,\gamma_k})^*(\gamma_{k+1+p})\big)+(L'_{\alpha, 3,\gamma_k})^*(\gamma_{k+1+p})+\\
&(N^{\alpha}_k)^*(\delta^{\nabla^E}[k]\gamma_{k+1+p})+a_4 \alpha \wedge d^{\nabla^E}[k] \gamma_{k-p-1}=0.
\end{split}
\end{equation*}
It so happens that it contains all occurrences of $\gamma_k,$ beyond those within the structural equation, with the exception of $(N^{\alpha}_{k-1-p})^*(\delta^{\nabla^E}[k]\gamma_k).$ However, the latter is automatically zero, and therefore does not count. So it suffices to take $\gamma_{k+1+p}=0.$

The degree $k+1+p$ part of the EL equation is
\begin{equation*}
\begin{split}
&P^{\nabla^E}_{\alpha, (a,\phi,\psi)}[k](\gamma_k)+(L_{\alpha, 1,\gamma_{k+1+p}})^*(\gamma_{q_{a_1}(k+1+p)+p})+\delta^{\nabla^E}[k]\big((L'_{\alpha, 1,\gamma_{k+1+p}})^*(\gamma_{q_{a_1}(k+1+p)+p})\big)+\\
&(L''_{\alpha, 1,\gamma_{k+1+p}})^*(\gamma_{q_{a_1}(k+1+p)+p})+(L_{\alpha, 2,\gamma_{k+1+p}})^*(\gamma_{q_{a_2}(k+1+p)+p})+\delta^{\nabla^E}[k]\big((L'_{\alpha, 2,\gamma_{k+1+p}})^*(\gamma_{q_{a_2}(k+1+p)+p})\big)+\\
&\delta^{\nabla^E}[k]\big((L_{\alpha, 3,\gamma_{k+1+p}})^*(\gamma_{q_{a_3}(k+1+p)+p})\big)+(L'_{\alpha, 3,\gamma_{k+1+p}})^*(\gamma_{q_{a_3}(k+1+p)+p})+\\
&(N^{\alpha}_{k+1+p})^*(\delta^{\nabla^E}[k]\gamma_{2p+k+2})=0.
\end{split}
\end{equation*}
Therefore, take $\gamma_{q_{a_i}(k+1+p)+p}=0,$ for all $1 \leq i \leq 3,$ and $\delta^{\nabla^E} \gamma_{2p+k+2}=0.$ Let the intermediary sub-domain be given as 
\begin{equation*}
\begin{split}
\widetilde{\Omega}^k(M,E)&:=\{\gamma \in \Omega^{\bullet}(M,E) \mid \gamma_{k+1+p}=0, \gamma_{q_{a_1}(k+1+p)+p}=0, \gamma_{q_{a_2}(k+1+p)+p}=0,\\
&\gamma_{q_{a_3}(k+1+p)+p}=0, \delta^{\nabla^E} \gamma_{2p+k+2}=0\}
\end{split}
\end{equation*}
Let $\mathcal{S}$ be the critical point set of the functional. Then, its restriction to \[\widetilde{U}:=\{\gamma \in \widetilde{\Omega}^k(M,E) \mid \gamma_k \in U\}\] has critical point set $\mathcal{S} \cap \widetilde{U},$ and so \[p_k(\mathcal{S} \cap \widetilde{U})\] coincides with the compound $\alpha$-geometric structures.
\end{proof}

\begin{remark}\label{also}
The functionals \[\mathcal{P}^{\nabla^E}_{(a,\phi,\psi)}[k, k+2] \mbox{ and } \mathcal{P}^{\nabla^E}_{\alpha,(a,\phi,\psi)}[k, 2p+k+2]\] on $\Omega^{\bullet}(M,E)$ also realize compound $k$-geometric, respectively $\alpha$-compound-$k$-geometric structures. They manage to relax their intermediary sub-domains to \[\widetilde{\Omega}^k(M,E):=\{\gamma \in \Omega^{\bullet}(M,E) \mid \gamma_{k+1}=0, \gamma_{q_{a_1}(k+1)}=0, \gamma_{q_{a_2}(k+1)}=0, \gamma_{q_{a_3}(k+1)}=0\},\] respectively
\begin{equation*}
\begin{split}
\widetilde{\Omega}^k(M,E)&:=\{\gamma \in \Omega^{\bullet}(M,E) \mid \gamma_{k+1+p}=0, \gamma_{q_{a_1}(k+1+p)+p}=0, \gamma_{q_{a_2}(k+1+p)+p}=0,\\
&\gamma_{q_{a_3}(k+1+p)+p}=0\}.
\end{split}
\end{equation*}
\end{remark}

\begin{corollary}\label{fgfof}
Let \[(L_{1, \gamma})^*(\gamma)=\sum_{r\geq 0} (L_{1,\gamma_r})^*(\gamma_{q_{a_1}(r)}), (L'_{1, \gamma})^*(\gamma)=\sum_{r\geq 0}(L'_{1,\gamma_r})^*(\gamma_{q_{a_1}(r)}), (L''_{1, \gamma})^*(\gamma)=\sum_{r\geq 0}(L''_{1,\gamma_r})^*(\gamma_{q_{a_1}(r)}),\] \[(L_{2, \gamma})^*(\gamma)=\sum_{r\geq 0} (\gamma_{q_{a_2}(r)}), (L_{2, \gamma})^*(\gamma)=\sum_{r\geq 0} (L'_{2,\gamma_r})^*(\gamma_{q_{a_2}(r)}),\] and \[(L_{3, \gamma})^*(\gamma)=\sum_{r\geq 0}(L_{3,\gamma_r})^*(\gamma_{q_{a_3}(r)}), (L'_{3, \gamma})^*(\gamma)=\sum_{r\geq 0}(L'_{3,\gamma_r})^*(\gamma_{q_{a_3}(r)}).\]
The formal gradient flow of $\mathcal{P}^{\nabla^E}_{(a,\phi,\psi)}[k]$ is

\begin{equation*}
\begin{split}
\frac{\partial \gamma}{\partial t}&=-\Big((L_{1, \gamma})^*(\gamma)+\delta^{\nabla^E}[k]\big((L'_{1, \gamma})^*(\gamma)\big)+(L''_{1, \gamma})^*(\gamma)+(L_{2, \gamma})^*(\gamma)+\\
&\delta^{\nabla^E}[k]\big((L_{2, \gamma})^*(\gamma)\big)+\delta^{\nabla^E}[k]\big((L_{3, \gamma})^*(\gamma) \big)+(L'_{3, \gamma})^*(\gamma)+\\
&a_4\delta^{\nabla^E}[k]\gamma+\mathbf{P}^{\nabla^E}_{(a,\phi,\psi)}[k](\gamma)\Big).
\end{split}
\end{equation*}

Let \[(L_{\alpha, 1, \gamma})^*(\gamma)=\sum_{r\geq 0} (L_{\alpha, 1,\gamma_r})^*(\gamma_{q_{a_1}(r)+p}), (L'_{\alpha, 1, \gamma})^*(\gamma)=\sum_{r\geq 0}(L'_{\alpha, 1,\gamma_r})^*(\gamma_{q_{a_1}(r)+p}),\] \[(L''_{\alpha, 1, \gamma})^*(\gamma)=\sum_{r\geq 0}(L''_{\alpha, 1,\gamma_r})^*(\gamma_{q_{a_1}(r)+p}),\] \[(L_{\alpha, 2, \gamma})^*(\gamma)=\sum_{r\geq 0} (L_{\alpha, 2,\gamma_r})^*(\gamma_{q_{a_2}(r)+p}), (L'_{\alpha, 2, \gamma})^*(\gamma)=\sum_{r\geq 0}(L'_{\alpha, 2,\gamma_r})^*(\gamma_{q_{a_2}(r)+p}),\] and \[(L_{3, \gamma})^*(\gamma)=\sum_{r\geq 0}(L_{\alpha, 3,\gamma_r})^*(\gamma_{q_{a_3}(r)+p}), (L'_{\alpha, 3, \gamma})^*(\gamma)=\sum_{r\geq 0}(L'_{\alpha, 3,\gamma_r})^*(\gamma_{q_{a_3}(r)+p}),\] and let \[(N^{\alpha})^*(\delta^{\nabla^E}[k] \gamma)=\sum_{r\geq 0} (N^{\alpha}_r)^*(\delta^{\nabla^E}[k]\gamma_{p+r+1}).\]

The formal gradient flow of $\mathcal{P}^{\nabla^E}_{\alpha,(a,\phi,\psi)}[k]$ is

\begin{equation*}
\begin{split}
\frac{\partial \gamma}{\partial t}&=-\Big((L_{\alpha, 1, \gamma})^*(\gamma)+\delta^{\nabla^E}[k]\big((L'_{\alpha, 1, \gamma})^*(\gamma)\big)+(L''_{\alpha, 1, \gamma})^*(\gamma)+(L_{\alpha, 2, \gamma})^*(\gamma)+\\
&\delta^{\nabla^E}[k]\big((L_{\alpha, 2, \gamma})^*(\gamma)\big)+\delta^{\nabla^E}[k]\big((L_{\alpha, 3, \gamma})^*(\gamma) \big)+(L'_{\alpha, 3, \gamma})^*(\gamma)+\\
&(N^{\alpha})^*(\delta^{\nabla^E}[k] \gamma)+\mathbf{P}^{\nabla^E}_{\alpha, (a,\phi,\psi)}(\gamma)\Big).
\end{split}
\end{equation*}
\end{corollary}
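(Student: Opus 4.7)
The proof amounts to repackaging the first-variation identities already obtained in the proofs of Theorems \ref{T1} and \ref{T2}. Recall that the formal $L^2$-gradient $\nabla\mathcal{F}(\gamma)$ of a functional $\mathcal{F}\colon\Omega^{\bullet}(M,E)\to\mathbb{R}$ relative to $\langle\langle\cdot,\cdot\rangle\rangle_{h_E}$ is the unique element characterized by
\[\frac{d}{dt}\Big|_{t=0}\mathcal{F}(\gamma+t\beta)=\langle\langle\beta,\nabla\mathcal{F}(\gamma)\rangle\rangle_{h_E}\]
for every admissible variation $\beta$, and the formal gradient flow is $\partial_t\gamma=-\nabla\mathcal{F}(\gamma)$. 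So my plan is simply to isolate whatever sits in the second slot of $\langle\langle\beta,\cdot\rangle\rangle_{h_E}$ in the final expressions for the first variations, rewrite those $r$-sums using the aggregate notation $(L_{1,\gamma})^{*},\ldots,(L'_{3,\gamma})^{*}$ introduced in the statement, and then attach a minus sign.

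For $\mathcal{P}^{\nabla^E}_{(a,\phi,\psi)}[k]$, I would start directly from the last displayed line of the first-variation calculation in the proof of Theorem \ref{T1}, which already has the form $\langle\langle\beta,[\,\cdot\,]+\mathbf{P}^{\nabla^E}_{(a,\phi,\psi)}[k](\gamma)\rangle\rangle_{h_E}$. Collecting the $r$-summed contributions $\sum_{r\geq 0}(L_{1,\gamma_r})^{*}(\gamma_{q_{a_1}(r)})$ and its six siblings into $(L_{1,\gamma})^{*}(\gamma),\ldots,(L'_{3,\gamma})^{*}(\gamma)$ is a definition chase. The only non-cosmetic reindexing is in the $a_{4}$-piece: the sum $\sum_{r\geq 0}a_{4}\,\delta^{\nabla^{E}}[k]\gamma_{r+1}$ equals $a_{4}\,\delta^{\nabla^{E}}[k]\gamma$, since $\delta^{\nabla^{E}}[k]$ annihilates the degree-zero component anyway. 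Reading off the result and negating yields the asserted flow.

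The $\alpha$-version proceeds identically, starting from the first-variation identity displayed inside the proof of Theorem \ref{T2}. The maps $L_{\alpha,i,\gamma_r}=\alpha\wedge L_{i,\gamma_r}$ and $N^{\alpha}_{r}(x)=a_{4}\,\alpha\wedge x$ take over the roles of the plain $L$-maps and the bare $a_{4}\,d^{\nabla^{E}}[k]$ term. The only new feature is that the $a_{4}$-contribution is already in the form $(N^{\alpha}_{r})^{*}(\delta^{\nabla^{E}}[k]\gamma_{p+r+1})$, whose $r$-sum is precisely the aggregate $(N^{\alpha})^{*}(\delta^{\nabla^{E}}[k]\gamma)$ named in the statement, so no telescoping trick is needed.

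There is no genuine obstacle here, only notational hygiene: one must make sure that the domain–codomain degree shifts in the definitions of the $L_{i,\gamma_{r}}$, $L'_{i,\gamma_{r}}$, $L''_{i,\gamma_{r}}$, and their adjoints line up with the correct homogeneous components $\gamma_{q_{a_i}(r)}$ (respectively $\gamma_{q_{a_{i}}(r)+p}$ in the $\alpha$-case) of $\gamma$ under the summations, so that each term on the right of the first-variation identity is genuinely paired against $\beta_{r}$ through a degree-compatible $\langle\cdot,\cdot\rangle^{h_{E}}_{r}$. Once that bookkeeping is respected, the corollary is a direct transcription of the computations already performed in Theorems \ref{T1} and \ref{T2}.
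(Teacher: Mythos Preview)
Your proposal is correct and matches the paper's approach: the corollary is stated without proof precisely because it is an immediate transcription of the first-variation identities already displayed in the proofs of Theorems~\ref{T1} and~\ref{T2}, and you have spelled out exactly that transcription, including the one nontrivial bookkeeping step (the reindexing $\sum_{r\geq 0}a_4\delta^{\nabla^E}[k]\gamma_{r+1}=a_4\delta^{\nabla^E}[k]\gamma$).
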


The formal gradient flows for \[\mathcal{P}^{\nabla^E}_{(a,\phi,\psi)}[k, k+2] \mbox{ and } \mathcal{P}^{\nabla^E}_{\alpha,(a,\phi,\psi)}[k, 2p+k+2]\] can be obtained via a similar process. Observe how Theorems \ref{T1} and \ref{T2} generalize Theorems 2 and 3 from \cite{CSCPS}.

\section{Stability}
In general, the main challenge seems to be to\\

\noindent
\textbf{Geometric structure existence (GSE) problem:} Obtain a full characterization of (compact) smooth manifolds that admit a geometric structure of a specified kind.\\

In the realm of almost-complex manifolds, it makes more sense to state the challenge from the vantage point of non-existence since at the moment, there are apparently no known examples of almost-complex, non-complex manifolds of real dimension at least $6.$

It could be useful to try to understand the extent to which the functionals studied here are (non-)convex, and their EL equations are (non-)elliptic. It could be meaningful as well to try establishing the short-time existence of their associated flows. However, this section seeks to postulate a formal analog of K-stability by using the asymptotic derivative definition, which is briefly summarized below. A concrete example of the sought formal analogy can be found in Section 4 of \cite{CSCPS}. K-stability has been shown to be equivalent to the existence of a Fano K{\"a}hler-Einstein (KE) metric \cite{CDS}, and this is one reason to believe that perhaps the concept is more widely applicable to existence questions of differential geometry.

Recall that if $X$ is a Fano manifold, and $\Omega$ is a fixed K{\"a}hler class, then the K-energy is the unique functional $\mathcal{K}$ on $\Omega$ whose critical points are the constant scalar curvature K{\"a}hler (cscK) metrics, and such that $\mathcal{K}(0)=0.$ Let $L \rightarrow X$ be an ample line bundle; i.e.\ for some $r,$ there is a basis of sections in $H^0(X,L^r),$ giving an embedding $\epsilon:X \hookrightarrow \mathbb{CP}^{N_r},$ $\epsilon(p)=[s_0(p):\dots:s_{N_r}(p)].$ A \emph{test-configuration} is the data $(\epsilon, \lambda),$ where $\lambda$ is a $\mathbb{C}^*$-action on $\mathbb{CP}^{N_r},$ or equivalently, $\lambda:\mathbb{C}^* \hookrightarrow \GL_{N_r +1}(\mathbb{C})$ is a 1-parameter subgroup. The \emph{central fiber} is the flat limit $X_0:=\lim_{t \to 0} \lambda(t) \cdot X.$ For all $t \neq 0,$ $\omega_t:=\frac{1}{t} \epsilon^* (\omega_{FS})$ is a K{\"a}hler metric on $\lambda(t) \cdot X.$ Up to normalization, the DF invariant of the test-configuration $(\epsilon,\lambda)$ is the asymptotic derivative \[DF(\epsilon,\lambda):=\lim_{t \to \infty} \frac{d\mathcal{K}(\omega_t)}{dt}.\] Then, K-stability is the condition that the DF invariant stay positive along all non-trivial test-configurations. 

The following is a sketch of how the above could be implemented in the context of compound geometric structures. Say the GSE Problem was about some geometric structures on a manifold $M$ that are generally describable as $U_M \cap \ker{P^{\nabla^{E_M}}_{(a,\phi_M,\psi_M)}}$ for a subspace $U_M \subset \Omega^k(M,E_M).$ Let $(M, \gamma)$ be a \emph{pre-structured space}, meaning that $\gamma \in U_M.$ Suppose that there existed a structured space $(N, \Gamma),$ $\Gamma \in U_N \cap \ker{P^{\nabla^{E_N}}_{(a',\phi_N,\psi_N)}},$ along with a universal embedding $F:(M,\gamma) \hookrightarrow (N,\Gamma),$ canonically inducing $\gamma$ as a pre-structure on $F(M).$ For example, depending on the kind of structures one is dealing with, $F$ could induce $\gamma$ via some reasonably defined pull-back. In the almost-complex case (cf.\ Section 4 \cite{CSCPS}), where the pre-structures are indeed almost-complex structures, the pull-back structure is obtained from a universal, transverse to a distribution, totally real embedding into a complex manifold, $F:(M,J) \hookrightarrow (Z,J_Z,\mathcal{D}).$ The complex structure on $Z$ descends to a complex structure $J_Z:{T_Z}/{\mathcal{D}} \to {T_Z}/{\mathcal{D}},$ which when conjugated by a point-wise defined $\mathbb{R}$-linear isomorphism ${T_Z}/{\mathcal{D}} \simeq_g F_{*}(T_M),$ gives an almost-complex structure on $F(M).$ For details on this embedding theory, see \cite{GUES} and the sources cited in there.

Let $\theta$ be a $\GL^{+}_1(\mathbb{R})$-action on $N,$ where $\GL^{+}_1(\mathbb{R})$ is the identity component. Call the pair $(F,\theta)_{\gamma}$ a \emph{test-configuration of} M \emph{relative (rel.)} $\gamma.$ If $M_t:=\theta(t) \cdot M,$ and $t > 0,$ the diffeomorphism $f_t:M \to M_t,$ $f_t(p)=\theta(t)p$ gives a pre-structure $\gamma_t:=(f_t)_{*} \circ F^* \Gamma \circ (f_t)^{-1}_{*}$ on $M_t.$ Using the terminology introduced in the previous section, choose an extension $\rho_{\gamma_t} \in \widetilde{U_{M_t}}.$ Say that $M$ is\\

\noindent
\emph{stable} rel.\ $\gamma$ iff for all test-configurations $(F,\theta)_{\gamma}$ rel.\ $\gamma,$ \[P\big((F,\theta)_{\gamma}\big):=\lim_{t \to \infty} \frac{d\mathcal{P}^{\nabla^{E_M}}_{(a,\phi,\psi)}[k](\rho_{\gamma_t})}{dt} > 0,\]

\noindent
$\alpha$-\emph{stable} rel.\ $\gamma$ iff for all test-configurations $(F,\theta)_{\gamma}$ rel.\ $\gamma,$ \[P_{\alpha} \big((F,\theta)_{\gamma}\big):=\lim_{t \to \infty} \frac{d\mathcal{P}^{\nabla^{E_M}}_{\alpha,(a,\phi,\psi)}[k](\rho_{\gamma_t})}{dt} > 0.\]

It seems natural to ask:

\begin{question}\label{K1}
Is there a link between stability and the GSE Problem?
\end{question}

As they stand, these stability definitions are devoid of geometrical content. In the Fano K{\"a}hler setting, the geometry comes from a correspondence between cssK metrics on $X$ in $\Omega$ and zeros of a moment map for a (complexified) Hamiltonian action of the symplectomorphism group on the space of compatible almost-complex structures, viewed as an infinite dimensional K{\"a}hler manifold \cite{Don}. Then, the Kempf-Ness Theorem furnishes a finite-dimensional picture that draws a parallelism between stability in the geometric invariant theory sense (the Hilbert-Mumford criterion) and the existence of canonical K{\"a}hler metrics \cite{Gabor}. So, unlike K-stability, the stability notions postulated here are not supported by any sort of general algebro-geometric framework that could infuse them with meaning. This raises the 

\begin{question}\label{A1}
Is stability truly an algebraic phenomenon? 
\end{question}

Or could it be an overarching moduli space phenomenon that, in the Fano KE instance, happens to be more easily detectable through algebraic means?

\noindent
Gabriella Clemente                                

\noindent
e-mail: clemente6171@gmail.com
\end{document}